\begin{document}
\frontmatter
\title{Some properties and applications of Brieskorn~lattices}

\author[C.~Sabbah]{Claude Sabbah}
\address[C.~Sabbah]{CMLS, École polytechnique, CNRS, Université Paris-Saclay\\
F--91128 Palaiseau cedex\\
France}
\email{Claude.Sabbah@polytechnique.edu}
\urladdr{http://www.math.polytechnique.fr/perso/sabbah}

\begin{abstract}
After reviewing the main properties of the Brieskorn lattice in the framework of tame regular functions on smooth affine complex varieties, we prove a conjecture of Katzarkov-Kontsevich-Pantev in the toric case.
\end{abstract}

\subjclass{14F40, 32S35, 32S40}
\keywords{Brieskorn lattice, irregular Hodge filtration, irregular Hodge numbers, tame function}

\maketitle
\tableofcontents
\mainmatter

\section{Introduction}
The \emph{Brieskorn lattice}, introduced by Brieskorn in \cite{Brieskorn70} in order to provide an algebraic computation of the Milnor monodromy of a germ of complex hypersurface with an isolated singularity, has also proved central in the Hodge theory for vanishing cycles of such a singularity, as emphasized by Pham \cite{Pham79, Pham83b}. Hodge theory for vanishing cycles, as developed by Steenbrink \cite{Steenbrink76,Steenbrink77,S-S85} and Varchenko \cite{Varchenko82}, makes it an analogue of the Hodge filtration in this context, and fundamental results have been obtained by M.\,Saito \cite{MSaito89} in order to characterize it among other lattices in the Gauss-Manin system of an isolated singularity of complex hypersurface. As such, it leads to the definition of a period mapping, as introduced and studied with much detail by K.\,Saito for some singularities \cite{KSaito83b}. It is also a basic constituent of the period mapping restricted to the $\mu$-constant stratum \cite{MSaito91}, where a natural Torelli problem occurs (\cf \cite{MSaito91}, \cite{Hertling99b}).

For a holomorphic germ $f:(\CC^{n+1},0)\to(\CC,0)$ with an isolated singularity, denoting by $t$ the coordinate on the target space $\CC$, the space
\begin{equation}\label{eq:Brlat}
\Omega^{n+1}_{\CC^{n+1},0}/\rd f\wedge\rd\Omega^{n-1}_{\CC^{n+1},0}
\end{equation}
is naturally endowed with a $\CC\{t\}$-module structure (where $t$ acts as the multiplication by $f$), and the \emph{Brieskorn lattice} is the $\CC\{t\}$-module (\cf\cite[p.\,125]{Brieskorn70})
\begin{equation}\label{eq:Brlatt}
{}''\!H^n_{f,0}=\Big(\Omega^{n+1}_{\CC^{n+1},0}/\rd f\wedge\rd\Omega^{n-1}_{\CC^{n+1},0}\Big)\Big/\CC\{t\}\text{-torsion}.
\end{equation}
Brieskorn shows that \eqref{eq:Brlatt} is free of finite rank equal to the Milnor number $\mu(f,0)$, and Sebastiani \cite{Sebastiani70} shows the torsion freeness of \eqref{eq:Brlat}, which can thus also serve as an expression for ${}''\!H^n_{f,0}$. It is also endowed with a meromorphic connection~$\nabla$ having a pole of order at most two at $t=0$, and the $\CC(\!\{t\}\!)$-vector space with connection generated by ${}''\!H^n_{f,0}$ is isomorphic to the Gauss-Manin connection, which has a regular singularity there. ${}''\!H^n_{f,0}$ is thus a $\CC\{t\}$-lattice of this $\CC(\!\{t\}\!)$-vector space. While the action of $\nabla_{\partial_t}$, simply written as $\partial_t$, introduces a pole, there is a well-defined action of its inverse $\partial_t^{-1}$ that makes ${}''\!H^n_{f,0}$ a module over the ring of $\CC\{\!\{\partial_t^{-1}\}\!\}$ of $1$-Gevrey series (\ie formal power series $\sum_{n\geq0}a_n\partial_t^{-n}$ such that the series $\sum_na_nu^n/n!$ converges). It happens to be also free of rank $\mu$ over this ring (\cite{Malgrange74b,Malgrange75b}). The relation between the rings $\CC\{t\}$ and $\CC\{\!\{\partial_t^{-1}\}\!\}$ is called \emph{microlocalization}. In the global case below, we will use instead the Laplace transformation. The mathematical richness of this object leads to various generalizations.

For non-isolated hypersurface singularities, the objects with definition as in \eqref{eq:Brlatt} (but in various degrees) have been introduced by Hamm in his Habilitationsschrift (\cf \cite[\S II.5]{Hamm75b}), who proved that they are $\CC\{t\}$-free of finite rank, but do not coincide with \eqref{eq:Brlat} in general. A natural $\CC\{\!\{\partial_t^{-1}\}\!\}$-structure still exists on \eqref{eq:Brlat}, and Barlet and Saito \cite{B-S07} have shown that the $\CC\{t\}$-torsion and the $\CC\{\!\{\partial_t^{-1}\}\!\}$-torsion coincide, so that ${}''\!H^k_{f,0}$ remains $\CC\{\!\{\partial_t^{-1}\}\!\}$-free of finite rank.

The Brieskorn lattice has also a global variant. On the one hand, the Brieskorn lattice for tame regular functions on smooth affine complex varieties (\cf Section \ref{sec:tame}) is a direct analogue of the case of an isolated singularity, but the double pole of the action of $t$ with respect to the variable $\partial_t^{-1}$ cannot in general be reduced to a simple one by a meromorphic (even formal) gauge transformation \ie the Gauss-Manin system with respect to the variable $\partial_t^{-1}$ has in general an irregular singularity. The properties of the Brieskorn module for regular functions on affine manifolds which are not tame have been considered by Dimca and M.\,Saito \cite{D-S01}.

On the other hand, given a \emph{projective} morphism $f:X\to\Afu$ on a smooth quasi-projective variety $X$, the Brieskorn modules, defined as the hypercohomology $\CC[\partial_t^{-1}]$-modules of the twisted de~Rham complex \hbox{$(\Omega_X^\cbbullet[\partial_t^{-1}],\rd-\partial_t^{-1}\rd f)$}, have been shown to be $\CC[\partial_t^{-1}]$-free (Barannikov-Kontsevich, \cf\cite{Bibi97b}), and a similar result holds when one replaces $\Omega_X^\cbbullet$ with $\Omega_X^\cbbullet(\log D)$ for some divisor with normal crossings. More generally, one can adapt the definition of the Brieskorn modules for the twisted de~Rham complex attached to a mixed Hodge module, and the $\CC[\partial_t^{-1}]$-freeness still holds, so that they can be called Brieskorn lattices (\cf\loccit). This enables one to use the push-forward operation by the map $f$ and reduce the study to that of Brieskorn lattices attached to mixed Hodge modules on the affine line, as for example the mixed Hodge modules that the Gauss-Manin systems of $f$ underlie. In such a way, the Brieskorn lattice has a \emph{purely Hodge-theoretic definition}, which does not refer to the underlying geometry, and can thus be attached, for example, to any polarizable variation of Hodge structure on a punctured affine line (\cf \cite[\S1.d]{Bibi05}).

The Brieskorn lattice of tame functions is of particular interest and has been considered in \cite{Bibi96bb} for example. The Brieskorn lattice for families of such functions, considered in \cite{D-S02a}, has been investigated with much care for families of Laurent polynomials in relation with mirror symmetry by Reichelt and Reichelt-Sevenheck \cite{R-S10,Reichelt14,Reichelt15,R-S17}.

Lastly, in the global setting as above, the pole of order two of the action of $t$ with respect to the variable $\partial_t^{-1}$ produces in general a truly irregular singularity, and the Brieskorn lattice is an essential tool to produce the \emph{irregular Hodge filtration} attached to such a singularity (\cf \cite{S-Y14,Bibi15}).

The contents of this article is as follows. In Section \ref{sec:tame}, we review known results on the Brieskorn lattice for a tame function. We show in Section \ref{sec:KKP} how these results enables one to obtain a simple proof of a conjecture of Katzarkov-Kontsevich-Pantev in the toric case.

\subsubsection*{Acknowledgements}
I thank the referee for his/her careful reading of the manuscript and  interesting suggestions and Claus Hertling for pointing out Lemma \ref{lem:CH}.

\section{The Brieskorn lattice of a tame function}\label{sec:tame}

In this section, we review the main properties of the Brieskorn lattice attached to a tame function on an affine manifold, following \cite{Bibi96b, Bibi96bb,D-S02a}.

Let $U$ be a smooth complex affine variety of dimension $n$ and let $\fun\in\cO(U)$ be a regular function on $U$. There are various notions of tameness for such a function, which are not known to be equivalent, but for what follows they have the same consequences. One of the definitions, given by Katz in \cite[Th.\,14.13.3]{Katz90}, is that the cone of $f_!\CC_U\to\bR f_*\CC_U$ should have constant cohomology on $\Afu$. We will use the notion of a weakly tame function, as defined in \cite{N-S97}, that is, either cohomologically tame or M-tame. 

We assume that $\fun$ is weakly tame. Let $\theta$ be a new variable. The \emph{Brieskorn lattice} attached to $\fun$ is the $\CC[\theta]$-module
\[
G_0:=\Omega^n(U)[\theta]\big/(\theta\rd-\rd\fun)\Omega^{n-1}(U)[\theta].
\]
An expression like \eqref{eq:Brlat} also exists if $U$ is the affine space $\mathbb{A}^{n+1}$, but the above one is valid for any smooth affine variety~$U$. The variable $\theta$ is for $\partial_t^{-1}$. We already notice that
\begin{equation}\label{eq:G0}
G_0/\theta G_0\simeq\Omega^n(U)/\rd\fun\wedge\Omega^{n-1}(U)
\end{equation}
has dimension equal to the sum $\mu=\mu(\fun)$ of the Milnor numbers of $\fun$ at all its critical points in $U$. The following properties are known in this setting.

\begin{enumerate}
\item\label{enum:tame1}
The algebraic Gauss-Manin systems $\cH^k\fun_+\cO_U$ are isomorphic to powers of the $\Clt$-module $(\CC[t],\partial_t)$, except for $k=0$, so their localized Laplace transforms vanish except that for $k=0$. If we regard the Laplace transform of $\cH^0\fun_+\cO_U$ as a $\Cltau$-module, we know that it has finite type as such, and its localized Laplace transform $G$, that is, the $\CC[\tau,\tau^{-1}]$-module obtained by localization, is free of rank $\mu$. We have
\[
G=\Omega^n(U)[\tau,\tau^{-1}]\big/(\rd-\tau\rd\fun)\Omega^{n-1}(U)[\tau,\tau^{-1}].
\]
\item\label{enum:tame2}
Setting $\theta=\tau^{-1}$, we write
\[
G=\Omega^n(U)[\theta,\theta^{-1}]\big/(\theta\rd-\rd\fun)\Omega^{n-1}(U)[\theta,\theta^{-1}],
\]
and there is therefore a natural morphism $G_0\to G$. This morphism is \emph{injective}, so that $G_0$ is a \emph{free} $\CC[\theta]$-module of rank $\mu$ such that $\CC[\theta,\theta^{-1}]\otimes_{\CC[\theta]}G_0=G$, \ie $G_0$ is a $\CC[\theta]$-lattice of $G$, on which the restriction of the Gauss-Manin connection has a pole of order at most two. Moreover, the action of $\theta^2\partial_\theta$ on the class $[\omega]$ of $\omega\in\Omega^n(U)$ in~$G_0$ is given~by\enlargethispage{\baselineskip}%
\[
\theta^2\partial_\theta[\omega]=[\fun\omega],
\]
and the action of $\theta^2\partial_\theta$ on a polynomial $\sum_{k\geq0}[\omega_k\theta^k]$ is obtained by the usual formulas.

\item\label{enum:tame3}
Let $V_\bbullet G$ be the (increasing) $V$-filtration of $G$ with respect to the function~$\tau$ (recall that $G$ has a regular singularity at $\tau=0$, while that at infinity is usually irregular). It is a filtration by free $\CC[\tau]$-modules of rank $\mu$ indexed by $\QQ$. The jumping indices of the induced filtration $V_\bbullet (G_0/\theta G_0)$, together with their multiplicities (the dimension of $\gr_\beta^V(G_0/\theta G_0)$) form the \emph{spectrum of $\fun$ at $\infty$}. The jumping indices are contained in the interval $[0,n]\cap\QQ$ and the spectrum is symmetric with respect to~$n/2$. 
\item\label{enum:tame4}
On the other hand, for $\alpha\in[0,1)\cap\QQ$, the vector space $\gr_\alpha^VG$ is endowed with the nilpotent endomorphism $\rN$ induced by the action of $-(\tau\partial_\tau+\alpha)$ and with the increasing filtration $G_\bbullet\gr_\alpha^VG$ naturally induced by the filtration $G_p=\theta^{-p}G_0$, \ie
\[
G_p\gr_\alpha^VG=(G_p\cap V_\alpha G)/(G_p\cap V_{<\alpha}G),
\]
where the intersections are taken in $G$. As a consequence, we have isomorphisms ($p\in\ZZ$, $\alpha\in[0,1)$)
\[
\gr_p^G\gr_\alpha^VG\xrightarrow[\sim]{\textstyle~\theta^p~}\gr_{\alpha+p}^V(G_0/\theta G_0).
\]
\item\label{enum:tame5}
The $\CC$-vector space $H_{\neq1}\!:=\!\bigoplus_{\alpha\in(0,1)\cap\QQ}\gr_\alpha^VG$, \resp $H_1\!:=\!\gr_0^VG$, endowed with
\begin{itemize}
\item
the filtration
\[
F^pH_{\neq1}:=\bigoplus_{\alpha\in(0,1)\cap\QQ}G_{n-1-p}\gr_\alpha^VG\quad \resp F^pH_1=G_{n-p}\gr_0^VG,
\]

\item
and the weight filtration $W_\bbullet=\rM(\rN)[n-1]$ (\resp $\rM(\rN)[n]$), \ie the monodromy filtration of $\rN$ centered at $n-1$ (\resp $n$),
\end{itemize}
is part of a mixed Hodge structure. In particular, $\rN$ strictly shifts by one the filtration $G_\bbullet\gr_\alpha^VG$ and acts on the graded space $\gr_\bbullet^G\gr_\alpha^VG$ as the degree-one morphism induced by $-\tau\partial_\tau$. We therefore have a commutative diagram, for any $\alpha\in[0,1)$ and $p\in\ZZ$, (\cf \cite{Varchenko81} and \cite[\S7]{S-S85} in the singularity case):
\begin{equation}\label{eq:diag}
\begin{array}{c}
\xymatrix@C=1.2cm@R=.7cm{
\gr_p^G\gr_\alpha^VG\ar[r]^-{\theta^p}_-\sim\ar[d]_{[\rN]}&\gr_{\alpha+p}^V(\Omega^n(U)/\rd\fun\wedge\Omega^{n-1}(U))\ar[d]^{[\fun]}\\
\gr_{p+1}^G\gr_\alpha^VG\ar[r]^-{\theta^{p+1}}_-\sim&\gr_{\alpha+p+1}^V(\Omega^n(U)/\rd\fun\wedge\Omega^{n-1}(U)),
}
\end{array}
\end{equation}
by using the relation $-\tau\partial_\tau=\theta\partial_\theta$.
\par
To see this, write the commutative diagram
\[
\xymatrix@C=1.2cm@R=1cm{
\gr_p^G\gr_\alpha^VG\ar[r]^-{\theta^p}_-\sim\ar[d]_{\theta\partial_\theta-\alpha}&\gr_{\alpha+p}^V\gr_0^GG\ar[d]_{\theta\partial_\theta-(\alpha+p)}\ar[rd]\\
\gr_{p+1}^G\gr_\alpha^VG\ar[r]^-{\theta^p}_-\sim&\gr_{\alpha+p}^V\gr_1^GG\ar[r]^-\theta&\gr_{\alpha+p+1}^V\gr_0^GG
}
\]
and use that in the vertical morphisms, the constant part $\alpha$ or $\alpha+p$ induces the morphism $0$.

\item\label{enum:tame5b}
Recall that a mixed Hodge structure $(H_\QQ,F^\cbbullet H_\CC,W_\bbullet H_\QQ)$ is said to be \emph{of Hodge-Tate type} if
\begin{enumerate}
\item\label{enum:tame5ba}
the filtration $W_\bbullet$ has only even jumping indices
\item\label{enum:tame5bb}
and $W_{2\bbullet}H_\CC$ is opposite to $F^\cbbullet H_\CC$. 
\end{enumerate}
The description of the mixed Hodge structure given in \eqref{enum:tame5} implies the following criterion. We will set $\nu=n-1$ when considering $H_{\neq1}$ and $\nu=n$ when considering $H_1$. We will then denote by $H$ either $H_{\neq1}$ or $H_1$.

\begin{corollaire}\label{cor:criterionHT}
The mixed Hodge structure that the triple $(H,F^\cbbullet H,W_\bbullet H)$ underlies is of Hodge-Tate type if and only if, for any integer $k$ such that $0\leq k\leq[\nu/2]$, the $(\nu-2k)$th power of $\rN$ induces an isomorphism
$$
[\rN]^{\nu-2k}:\gr_k^GH\isom\gr_{\nu-k}^GH.
$$
\end{corollaire}

\begin{proof}
We define the filtration $W'_\bbullet H$ indexed by $2\ZZ$ by the formula $W'_{2k}H=G_{\nu-k}H$, so that $G_kH=W'_{2(\nu-k)}H$. If we set $\ell=\nu-2k$ for $0\leq k\leq\nu/2$, we have $0\leq\ell\leq\nu$ and the isomorphism in the corollary is written
\[
[\rN]^\ell:\gr_{\nu+\ell}^{W'}H\isom\gr_{\nu-\ell}^{W'}H.
\]
We can conclude that $W'_\bbullet H=W_\bbullet H$ if we know that $\rN^{\nu+1}=0$, that is, $\gr_{\nu+1}^GH=0$. This is a consequence of the positivity of the spectrum \cite[Cor.\,13.2]{Bibi96bb}, which says that, if $\alpha\in[0,1)$, we have $\gr_k^G\gr_\alpha^VG=0$ for $k\notin[0,\nu]\cap\NN$. 
\end{proof}

The following lemma was pointed out to me by Claus Hertling.

\begin{lemme}\label{lem:CH}
A mixed Hodge structure $(H_\QQ,F^\cbbullet H_\CC,W_\bbullet H_\QQ)$ is Hodge-Tate if and only if we have, for all $p\in\tfrac12\ZZ$,
\[
\dim\gr^p_FH_\CC=\dim\gr_{2p}^WH_\QQ.
\]
\end{lemme}

\begin{proof}
Indeed, one direction is clear. Conversely, if the equality of dimensions holds, then \eqref{enum:tame5ba} holds since $F^\cbbullet H$ has only integral jumps; moreover, up to a Tate twist, one can assume that $W_{<0}H=0$, so $\gr^k_FH=0$ for $k<0$. It is enough to prove that $\gr^p_F\gr_{2\ell}^WH=0$ for all $p\neq\ell$. We prove this by induction on $\ell$. If $\ell=0$, the result follows from the property that $F^pH=0$ for $p<0$ and Hodge symmetry. Assume the result up to $\ell$. For $j\leq\ell$ we thus have $\dim\gr^j_F\gr_{2j}^WH=\dim\gr_{2j}^WH=\dim\gr^j_FH$ (the latter equality by the assumption), and therefore $\gr_{2i}^W\gr^j_FH=0$ for $i\neq j$. In particular, taking $i=\ell+1$, we have $\gr^p_F\gr_{2(\ell+1)}^WH=0$ for all $p\leq\ell$. By Hodge symmetry, we obtain $\gr^p_F\gr_{2(\ell+1)}^WH=0$ for all $p\neq\ell+1$, as wanted.
\end{proof}

\item\label{enum:tame6}
We now consider the case where $U=(\CC^*)^n$, endowed with coordinates $x=(x_1,\dots,x_n)$. Let $\fun\in\CC[x,x^{-1}]$ be a Laurent polynomial in $n$ variables, with Newton polyhedron $\Delta(\fun)$. We assume that $\fun$ is \emph{nondegenerate with respect to its Newton polyhedron} and \emph{convenient} (\cf \cite{Kouchnirenko76}). In particular, $0$ belongs to the interior of its Newton polyhedron. It is known that such a function is M-tame.

For any face $\sigma$ of dimension $n-1$ of the boundary $\partial\Delta(f)$, we denote by $L_\sigma$ the linear form with coefficients in $\QQ$ such that $L_\sigma\equiv1$ on $\sigma$. For $g\in\CC[x,x^{-1}]$, we set $\deg_\sigma(g)=\max_mL_\sigma(m)$, where the max is taken on the exponents of monomials $x^m$ appearing in~$g$, and $\deg_{\Deltas}(g)=\max_\sigma\deg_\sigma(g)$. We denote the volume form $\rd x_1/x_1\wedge\cdots\wedge \rd x_n/x_n$ by $\omega$, giving rise to an identification $\CC[x,x^{-1}]\isom\Omega^n(U)$ and $\CC[x,x^{-1}]/(\partial f)\isom G_0/\theta G_0$ (\cf\eqref{eq:G0}).

The Newton increasing filtration $\ccN_\bbullet\Omega^n(U)$ indexed by $\QQ$ is defined by
\[
\ccN_\beta\Omega^n(U):=\{g\omega\in\Omega^n(U)\mid\deg_{\Deltas}(g)\leq\beta\}.
\]
We have $\ccN_\beta\Omega^n(U)=0$ for $\beta<0$ and $\ccN_0\Omega^n(U)=\CC\cdot \omega$. We can extend this filtration to $\Omega^n(U)[\theta]$ by setting
\[
\ccN_\beta\Omega^n(U)[\theta]\defin\ccN_\beta\Omega^n(U)+\theta\ccN_{\beta-1}\Omega^n(U)+\cdots+\theta^k\ccN_{\beta-k}\Omega^n(U)+\cdots
\]
and then naturally induce this filtration on $G_0$, to obtain a filtration $\ccN_\bbullet G_0$ and then on $G_0/\theta G_0$. We have
\begin{equation}
\ccN_\bbullet G_0=V_\bbullet G\cap G_0\quad\text{and}\quad \ccN_\bbullet(G_0/\theta G_0)=V_\bbullet(G_0/\theta G_0).
\end{equation}
Corollary \ref{cor:criterionHT} now reads, according to \eqref{eq:diag} and by using the above identification through multiplication by~$\omega$:

\begin{corollaire}\label{cor:criterionHTLaurent}
The mixed Hodge structure that the triple $(H,F^\cbbullet H,W_\bbullet H)$ underlies is of Hodge-Tate type if and only if, for any integer $k$ such that $0\leq k\leq[\nu/2]$ ($\nu=n-1$, \resp $n$), we have isomorphisms
\begin{align*}
\gr_{\alpha+k}^\ccN\bigl(\CC[x,x^{-1}]/(\partial f)\bigr)&\xrightarrow[\textstyle\sim]{\textstyle[f]^{n-1-2k}}\gr_{\alpha+n-1-k}^\ccN\bigl(\CC[x,x^{-1}]/(\partial f)\bigr)\quad\forall\alpha\in(0,1),\\
\tag*{resp.}\gr_k^\ccN\bigl(\CC[x,x^{-1}]/(\partial f)\bigr)&\xrightarrow[\textstyle\sim]{\textstyle[f]^{n-2k}}\gr_{n-k}^\ccN\bigl(\CC[x,x^{-1}]/(\partial f)\bigr).
\end{align*}
\end{corollaire}
\end{enumerate}

\section{On a conjecture of Katzarkov-Kontsevich-Pantev}\label{sec:KKP}
In this section we use the algebraic Brieskorn lattice of a convenient nondegenerate Laurent polynomial to solve the toric case of the part ``$f^{p,q}=h^{p,q}$'' of Conjecture 3.6 in \cite{K-K-P14} (the other equality ``$h^{p,q}=i^{p,q}$'' is obviously not true by simply considering the case of the standard Laurent polynomial mirror to the projective space $\PP^n$, see also another counter-example in \cite{L-P16}). We refer to \cite{L-P16,Harder17,Shamoto17} for further discussion and positive results on this conjecture.

\subsection{The Brieskorn lattice and the conjecture of Katzarkov-Kontsevich-Pantev}
Given a smooth quasi-projective variety $U$ and a morphism $\fun:U\!\to\!\Afu$, \hbox{every} twisted de~Rham cohomology $H^k_{\DR}(U,\rd+\rd \fun)$, \ie the $k$th hypercohomology of the twisted de~Rham complex $(\Omega_U^\cbbullet,\rd+\rd \fun)$, is endowed with a decreasing filtration $F_{\Yu}^\cbbullet H^k_{\DR}(U,\rd+\rd \fun)$ indexed by $\QQ$ (\cf \cite{Yu12}). For $\alpha\in[0,1)$, the filtration indexed by $\ZZ$ defined by $F_{\Yu,\alpha}^p=F_\Yu^{p-\alpha}$ can also be computed in terms of the Kontsevich complex $\Omega^\cbbullet_\fun(\alpha)$ together with its stupid filtration (\cf\cite[Cor.\,1.4.5]{E-S-Y13}). The irregular Hodge numbers $h_\alpha^{p,q}(\fun)$ are defined as
\begin{equation}\label{eq:alpha}
h_\alpha^{p,q}(\fun):=\dim\gr^{p-\alpha}_{F_\Yu}H^{p+q}_{\DR}(U,\rd+\rd \fun).
\end{equation}
It is well-known that $\dim H^k_{\DR}(U,\rd+\rd \fun)=\dim H^k(U,\fun^{-1}(t))$ for $|t|\gg0$. This space is endowed with a monodromy operator (around $t=\infty$), and we will consider the case where this monodromy operator is \emph{unipotent}. In such a case, the filtration $F_{\Yu}^\cbbullet H^{p+q}_{\DR}(U,\rd+\rd \fun)$ is known to jump at integers only, and in \eqref{eq:alpha} only $\alpha=0$ occurs. We then simply denote this number by $h^{p,q}(\fun)$, so that, in such a case,
\[
h^{p,q}(\fun):=\dim\gr^p_{F_\Yu}H^{p+q}_{\DR}(U,\rd+\rd \fun).
\]
Let $W_\bbullet$ be the monodromy filtration on $H^k(U,\fun^{-1}(t))$ centered at $k$. The conjecture of \cite{K-K-P14} that we consider is the possible equality (\cf \cite{L-P16,Harder17,Shamoto17})
\begin{equation}\label{eq:conjKKP}
h^{p,q}(\fun)=\dim\gr_{2p}^WH^{p+q}(U,\fun^{-1}(t)).
\end{equation}

If moreover $U$ is affine and $\fun$ is weakly tame, so that $H^{p+q}_{\DR}(U,\rd+\rd \fun)=0$ unless $p+q=n$, \cite[Cor.\,8.19]{S-Y14} gives, using the notation of Section \ref{sec:tame}:\footnote{The definition of $\delta_\gamma$ in \cite{S-Y14} should read $\dim\gr_\gamma^V(G_0(\fun)/uG_0(\fun))$.}
\[
h^{p,q}(\fun)=\begin{cases}
\dim\gr_{n-p}^V(G_0(\fun)/\theta G_0(\fun))=\dim\gr_F^p\gr_0^VG&\text{if }p+q=n,\\
0&\text{if }p+q\neq n,
\end{cases}
\]
and this is the number denoted by $f^{p,q}$ in \cite{K-K-P14}. In such a case, we have $H=H_1$ in the notation of Section \ref{sec:tame}\eqref{enum:tame5}.

The following criterion has been obtained, with a different approach of the irregular Hodge filtration, by Y.\,Shamoto.

\begin{proposition}[\cite{Shamoto17}]\label{prop:Shamoto}
Assume $U$ affine and $\fun$ weakly tame with unipotent monodromy operator at infinty. Then \eqref{eq:conjKKP} holds true if and only if the mixed Hodge structure of Section \ref{sec:tame}\eqref{enum:tame5} on $H=H_1$ is of Hodge-Tate type.
\end{proposition}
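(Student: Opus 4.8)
The plan is to connect the two characterizations of the Hodge-Tate property available to us—the one in Corollary~\ref{cor:criterionHT} phrased via the hard-Lefschetz-type isomorphisms $[\rN]^{\nu-2k}\colon\gr_k^GH\isom\gr_{\nu-k}^GH$, and the dimension criterion of Lemma~\ref{lem:CH}—and to match both sides of the conjectural equality \eqref{eq:conjKKP} against the two sides of Lemma~\ref{lem:CH}. The starting point is the formula recalled just before the statement: since $U$ is affine and $\fun$ weakly tame with unipotent monodromy at infinity, $H^{p+q}_{\DR}(U,\rd+\rd\fun)$ vanishes unless $p+q=n$, and for $p+q=n$ one has $h^{p,q}(\fun)=\dim\gr_F^p\gr_0^VG=\dim\gr^p_FH_1$ with $H=H_1$ and $\nu=n$. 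So the left-hand side of \eqref{eq:conjKKP} is exactly $\dim\gr^p_FH$.

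Next I would identify the right-hand side of \eqref{eq:conjKKP}. The space $H^{p+q}(U,\fun^{-1}(t))$ for $|t|\gg0$ is, under the hypotheses, concentrated in degree $n$ and carries the monodromy operator around $t=\infty$, whose logarithm (up to sign and the unipotency assumption) is the nilpotent $\rN$ of Section~\ref{sec:tame}\eqref{enum:tame4}; the monodromy weight filtration $W_\bbullet$ centered at $n$ appearing in \eqref{eq:conjKKP} is therefore precisely the filtration $\rM(\rN)[n]=W_\bbullet$ of Section~\ref{sec:tame}\eqref{enum:tame5}. Hence $\dim\gr_{2p}^WH^{p+q}(U,\fun^{-1}(t))=\dim\gr_{2p}^WH$ with the same $H=H_1$. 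Consequently, \eqref{eq:conjKKP} is literally the statement that $\dim\gr^p_FH=\dim\gr_{2p}^WH$ for all relevant $p$, and by Lemma~\ref{lem:CH} (applied to the mixed Hodge structure of Section~\ref{sec:tame}\eqref{enum:tame5}, noting that the $F$-filtration there jumps at integers because the monodromy is unipotent so only $\alpha=0$ occurs) this holds for all $p\in\tfrac12\ZZ$ if and only if that mixed Hodge structure is of Hodge-Tate type. That gives both implications at once.

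The one point requiring care—and the main potential obstacle—is the identification of the weight filtration on the topological cohomology $H^n(U,\fun^{-1}(t))$ with the filtration $W_\bbullet=\rM(\rN)[n]$ built from the Brieskorn-lattice data on $H_1=\gr_0^VG$. This is where one invokes the comparison between the de~Rham description and the nearby/vanishing cycle description: the localized Laplace transform $G$ with its $V$-filtration at $\tau=0$ computes, via $\gr_0^VG$ together with the action of $-\tau\partial_\tau$, the unipotent part of the nearby cycles of the Gauss–Manin system at $t=\infty$, and the identification of $\rN$ with the logarithm of the monodromy is the standard one (\cf \cite{Bibi96b,Bibi96bb,S-S85} and, for the global tame setting, the references of Section~\ref{sec:tame}); under the unipotent-monodromy hypothesis $H_{\neq1}=0$, so $H=H_1$ accounts for the full cohomology. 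I would state this identification explicitly, citing \cite{Bibi96bb} and \cite{S-Y14}, rather than reprove it. Once it is in place the proposition is immediate; I would therefore present the argument as: (i) rewrite the left side of \eqref{eq:conjKKP} via \cite[Cor.\,8.19]{S-Y14} as $\dim\gr^p_FH$; (ii) rewrite the right side via the nearby-cycle identification as $\dim\gr_{2p}^WH$; (iii) apply Lemma~\ref{lem:CH}.

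Remark that Corollary~\ref{cor:criterionHT} is not strictly needed for this particular proof—Lemma~\ref{lem:CH} suffices—but it provides the more concrete, checkable reformulation in terms of the powers of $[\fun]$ on $\gr^\ccN_\cbbullet(\CC[x,x^{-1}]/(\partial\fun))$ recorded in Corollary~\ref{cor:criterionHTLaurent}, which is what one will actually verify in the toric case treated in the next subsection; I would mention this so the reader sees how Proposition~\ref{prop:Shamoto} feeds into the rest of Section~\ref{sec:KKP}.
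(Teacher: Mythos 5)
Your proposal is correct and follows essentially the same route as the paper: reduce \eqref{eq:conjKKP} to the dimension criterion of Lemma~\ref{lem:CH} via \cite[Cor.\,8.19]{S-Y14} on one side, and on the other side identify $(\gr_0^VG,\rN)$ with $H^n(U,\fun^{-1}(t))$ equipped with the logarithm of its unipotent monodromy, so that the two weight filtrations agree. The only difference is that you leave this last identification as a citation to be made precise, whereas the paper pins it down concretely by passing to a projective extension $\Fun:\ccX\to\PP^1$, computing $H^n(U,\fun^{-1}(t))$ as hypercohomology of the Beilinson extension $\Xi_\Fun\ccF$, and invoking \cite[Cor.\,1.13]{Bibi96a}.
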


\begin{proof}
According to Lemma \ref{lem:CH}, proving the result amounts to identifying the space $\gr_0^VG$ endowed with its nilpotent operator $\rN$ with the space $H^n(U,\fun^{-1}(t))$ endowed with the nilpotent part of the (unipotent) monodromy (up to a nonzero constant). Choosing an extension $\Fun:\ccX\to\PP^1$ of $\fun$ as a projective morphism on a smooth variety $\ccX$ such that $\ccX\moins U$ is a divisor, and setting $\ccF=\bR j_*\CC_U$ ($j:U\hto\ccX$), we identify the dimension of $H^k(U,\fun^{-1}(t))$ with that of the $k$th-hypercohomology on $\ccX$ of the Beilinson extension $\Xi_\Fun\ccF$. Then the desired identification is given by \cite[Cor.\,1.13]{Bibi96a}.
\end{proof}

\subsection{The toric case of the conjecture of Katzarkov-Kontsevich-Pantev, first part}\label{sec:toric}
As usual in toric geometry, we denote by $M$ the lattice $\ZZ^n$ in $\CC^n$ and by $N$ its dual lattice. We fix a reflexive simplicial polyhedron $\Delta\subset\RR\otimes M$ with vertices in~$M$ and having $0$ in its interior (it is then the unique integral point in its interior), \cf\cite[\S4.1]{Batyrev94}. We denote by $\Deltas$ the dual polyhedron with vertices in $N$, which is also simplicial reflexive and has $0$ in its only interior point, and by $\Sigma\subset N$ the fan dual to $\Delta$, which is also the cone on $\Deltas$ with apex $0$. We assume that $\Sigma$ is the fan of nonsingular toric variety $X$ of dimension $n$, that is, each set of vertices of the same $(n-1)$-dimensional face of $\partial\Deltas$ is a $\ZZ$-basis of $N$. We know that
\begin{itemize}
\item
$X$ is Fano (\cite[Th.\,4.1.9]{Batyrev94}),
\item
the Chow ring $A^*(X)\simeq H^{2*}(X,\ZZ)$ is generated by the divisor classes $D_v$ corresponding to vertices $v\in V(\Deltas)$ of $\Deltas$, \ie primitive elements on the rays of $\Sigma$ (\cf \cite[p.\,101]{Fulton93}),
\item
we have $c_1(TX)=c_1(K_X^\vee)=\sum_{v\in V(\Deltas)}D_v$ in $H^{2*}(X,\ZZ)$ (\cf \cite[p.\,109]{Fulton93}), which satisfies Hard Lefschetz on $H^{2*}(X,\QQ)$, by ampleness of $K_X^\vee$.
\end{itemize}

Let us fix coordinates $x=(x_1,\dots,x_n)$ such that $\QQ[N]=\QQ[x,x^{-1}]$. We use the notation of Section \ref{sec:tame}\eqref{enum:tame6}. Due to the reflexivity of $\Deltas$, $L_\sigma$ has coefficients in~$\ZZ$ (it~corresponds to a vertex of $\Delta$). For $g\in\CC[x,x^{-1}]$, the $\sigma$-degree $\deg_\sigma(g)=\max_mL_\sigma(m)$ and the $\Deltas$-degree $\deg_{\Deltas}(g)=\max_\sigma\deg_\sigma(g)$ are thus nonnegative integers.

\begin{proposition}\label{prop:c1}
The case ``$f^{p,q}=h^{p,q}$'' of \cite[Conj.\,3.6]{K-K-P14} holds true if $\fun$ is the Laurent polynomial
\[
\fun(x)=\sum_{v\in V(\Deltas)}\hspace*{-3mm}x^v\in\QQ[x,x^{-1}].
\]
\end{proposition}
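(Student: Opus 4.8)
The plan is to reduce Proposition~\ref{prop:c1} to the Hodge--Tate criterion of Corollary~\ref{cor:criterionHTLaurent}, and then to recognize that criterion as an instance of Hard Lefschetz on the toric variety $X$. First I would record that $\fun(x)=\sum_{v\in V(\Deltas)}x^v$ is indeed convenient and nondegenerate with respect to its Newton polyhedron $\Delta(\fun)=\Delta$, so that all of Section~\ref{sec:tame}\eqref{enum:tame6} applies; in particular the monodromy at infinity is unipotent (this is standard for convenient nondegenerate Laurent polynomials, the spectrum lying in $[0,n]$ with the relevant block at $\alpha=0$), so we are in the situation $H=H_1$, $\nu=n$, and by Proposition~\ref{prop:Shamoto} it suffices to prove that the mixed Hodge structure on $H_1$ is of Hodge--Tate type. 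By Corollary~\ref{cor:criterionHTLaurent} this in turn amounts to showing that for each $k$ with $0\le k\le[n/2]$, multiplication by $[\fun]^{n-2k}$ induces an isomorphism
\[
\gr_k^\ccN\bigl(\CC[x,x^{-1}]/(\partial\fun)\bigr)\isom\gr_{n-k}^\ccN\bigl(\CC[x,x^{-1}]/(\partial\fun)\bigr)
\]
(and the corresponding vanishing of $\gr_{\alpha+k}^\ccN$ for $\alpha\in(0,1)$, which is what makes the spectrum concentrated at integers).

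The key computation is to identify the Newton-graded ring $\gr_\bbullet^\ccN\bigl(\CC[x,x^{-1}]/(\partial\fun)\bigr)$ with (a piece of) the cohomology ring $H^{2\bbullet}(X,\CC)$, under which $[\fun]$ corresponds to $c_1(K_X^\vee)=\sum_{v\in V(\Deltas)}D_v$. Concretely, the partial derivatives $x_i\partial_{x_i}\fun=\sum_v v_i\,x^v$ are linear combinations of the monomials $x^v$, $v\in V(\Deltas)$, and the Jacobian ideal $(\partial\fun)=(x_1\partial_{x_1}\fun,\dots,x_n\partial_{x_i}\fun)$ is generated by these linear relations among the $x^v$; passing to the associated graded for the Newton filtration, $\gr_\bbullet^\ccN\bigl(\CC[x,x^{-1}]/(\partial\fun)\bigr)$ becomes the quotient of the subalgebra generated by the classes of the $x^v$ (which sit in $\ccN$-degree~$1$ since $\deg_{\Deltas}(x^v)=L_\sigma(v)=1$ for the faces $\sigma$ containing $v$) by the ideal generated by $\sum_v v_i\,[x^v]$, together with the Stanley--Reisner relations $[x^v][x^{v'}]\cdots=0$ for vertices not lying on a common face (these come from the nondegeneracy/monomial structure, exactly as in Batyrev's computation of the cohomology of toric hypersurfaces). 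This is precisely the Cox/Stanley--Reisner presentation of $H^{2\bbullet}(X,\CC)$ with $[x^v]\leftrightarrow D_v$, so we get a graded ring isomorphism $\gr_\bbullet^\ccN\bigl(\CC[x,x^{-1}]/(\partial\fun)\bigr)\simeq H^{2\bbullet}(X,\CC)$ carrying $[\fun]$ to $c_1(K_X^\vee)$, and in particular the graded pieces vanish outside integer degrees $0,\dots,n$, which handles the ``$\alpha\in(0,1)$'' part of Corollary~\ref{cor:criterionHTLaurent}.

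With this identification in hand the proof finishes immediately: since $X$ is Fano, $K_X^\vee$ is ample, so cup product with $c_1(K_X^\vee)^{n-2k}$ gives the Hard Lefschetz isomorphism $H^{2k}(X,\CC)\isom H^{2(n-k)}(X,\CC)$ for all $0\le k\le[n/2]$, which is exactly the isomorphism required by Corollary~\ref{cor:criterionHTLaurent}. Hence $H_1$ is of Hodge--Tate type, and by Proposition~\ref{prop:Shamoto} the equality \eqref{eq:conjKKP}, i.e.\ ``$f^{p,q}=h^{p,q}$'', holds.

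I expect the main obstacle to be the ring-theoretic identification in the second paragraph: verifying carefully that the associated graded of $\CC[x,x^{-1}]/(\partial\fun)$ for the Newton filtration really is the full toric cohomology ring and not just a sub- or quotient ring. One must check that no extra relations appear in the associated graded beyond the linear ones $\sum_v v_i[x^v]=0$ and the Stanley--Reisner monomial relations, and that the dimension count matches $\mu(\fun)=\sum_p\dim H^{2p}(X,\CC)=\chi(X)$; this uses nondegeneracy (which controls $\dim\CC[x,x^{-1}]/(\partial\fun)$ via Kouchnirenko's formula, giving $n!\operatorname{vol}(\Delta)$, which for a reflexive simplicial $\Delta$ equals $\chi(X)$) together with the simpliciality of $\Deltas$ (to get that the Stanley--Reisner relations are exactly the ones defining $A^*(X)$). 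Everything else is formal once this dictionary is set up.
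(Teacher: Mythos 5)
Your proposal is correct and follows essentially the same route as the paper: reduce via Proposition~\ref{prop:Shamoto} and Corollary~\ref{cor:criterionHTLaurent} to the Hard Lefschetz property of $c_1(K_X^\vee)$ on $H^{2*}(X,\QQ)$, through an identification of $\gr_\bbullet^\ccN\bigl(\CC[x,x^{-1}]/(\partial\fun)\bigr)$ with the cohomology ring of $X$ sending $[\fun]$ to $\sum_v D_v$. The Stanley--Reisner identification you sketch in your second paragraph and flag as the main obstacle is exactly \cite[Th.\,1.1]{B-C-S05}, which the paper simply cites; note also the small slip that the Newton polyhedron of $\fun$ is $\Deltas$, not $\Delta$, and that the unipotence of the monodromy at infinity is most directly seen from the integrality of the linear forms $L_\sigma$ (reflexivity of $\Deltas$) rather than being ``standard'' for all convenient nondegenerate Laurent polynomials.
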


The idea of the proof is to notice that the property for the second morphism in Corollary \ref{cor:criterionHTLaurent} to be an isomorphism is exactly the property that $c_1(TX)$ satisfies the Hard Lefschetz property, and thus to identify its source and target as the cohomology of $X$ in suitable degree.

\begin{lemme}\label{lem:nondeg}
For $\Delta$ as above, any Laurent polynomial
\[
\fun_{\bma}(x)=\sum_{v\in V(\Deltas)}\hspace*{-3mm}a_vx^v\in\CC[x,x^{-1}],\quad \bma=(a_{v\in V})\in(\CC^*)^{V(\Delta^*)}.
\]
is convenient and non-degenerate in the sense of Kouchnirenko.
\end{lemme}

\begin{proof}
The Newton polyhedron of $\fun_{\bma}$ is equal to $\Deltas$, and $0$ belongs to its interior. In order to prove the non-degeneracy, we note that the vertices of any $(n-1)$-dimensional face $\sigma$ of $\partial\Deltas$ form a $\ZZ$-basis. It follows that, in suitable toric coordinates $y_1,\dots,y_n$, the restriction $\fun_{\bma|\sigma}$ can be written as $y_1+\cdots+y_n$, and the non-degeneracy is then obvious.
\end{proof}

\begin{proof}[Proof of Proposition \ref{prop:c1}]
Note that $\deg_{\Deltas}(\fun)=1$, as well as $\deg_{\Deltas}(x_i\partial\fun/\partial x_i)=1$. The Jacobian ring $\QQ[x,x^{-1}]/(\partial\fun)$ is endowed with the Newton filtration $\ccN_\bbullet$ induced by the $\Deltas$-degree $\deg_{\Deltas}$, and corresponds to $\ccN_\bbullet(G_0/\theta G_0)$ by multiplication by $\omega$. In the present setting, \cite[Th.\,1.1]{B-C-S05} identifies the graded ring $A^*(X)_\QQ$ with the graded ring
\[
\gr_\bbullet^\ccN\bigl(\QQ[x,x^{-1}]/(\partial\fun)\bigr).
\]
By applying Hard Lefschetz to $c_1(TX)$, we deduce that, for every $k\in\NN$ such that $0\leq k\leq[n/2]$, multiplication by the $(n-2k)$th power of the $\ccN$-class $[\fun]$ of $\fun$ induces an isomorphism
\[
[\fun]^{n-2k}:\gr_k^\ccN\bigl(\QQ[x,x^{-1}]/(\partial\fun)\bigr)\isom\gr_{n-k}^\ccN\bigl(\QQ[x,x^{-1}]/(\partial\fun)\bigr).
\]
By Corollary \ref{cor:criterionHTLaurent} for $H=H_1$, we deduce the assertion of the proposition from Proposition \ref{prop:Shamoto}.
\end{proof}

\subsection{The toric case of the conjecture of Katzarkov-Kontsevich-Pantev, second part}

We now prove the main result of this note.

\begin{theoreme}\label{th:KKPconj}
The case ``$f^{p,q}=h^{p,q}$'' of \cite[Conj.\,3.6]{K-K-P14} holds true for any Laurent polynomial
\[
\fun_{\bma}(x)=\sum_{v\in V(\Deltas)}\hspace*{-3mm}a_vx^v\in\CC[x,x^{-1}],\quad \bma=(a_{v\in V})\in(\CC^*)^{V(\Delta^*)}.
\]
\end{theoreme}

\begin{remarque}
The case where $n=3$ was already proved differently by Y.\,Shamoto \cite[\S4.2]{Shamoto17}.
\end{remarque}

\begin{proof}
Let us set $H(\fun_{\bma})=H_1(\fun_{\bma})=\gr_0^VG(\fun_{\bma})$, where $G(\fun_{\bma})$ is the localized Laplace transform of the Gauss-Manin system for $\fun_{\bma}$ as in Section \ref{sec:tame}\eqref{enum:tame2}. By Lemma \ref{lem:nondeg}, we can apply the results of Section~\ref{sec:tame} to $\fun_{\bma}$ for any $\bma\in(\CC^*)^{V(\Delta^*)}$. We will prove that, for fixed $p$, both terms $\dim\gr_{n-p}^GH(\fun_{\bma})$ and $\dim\gr_{2p}^WH(\fun_{\bma})$ in Lemma \ref{lem:CH} are independent of $\bma$. Since they are equal if $\bma=(1,\dots,1)$, after Proposition \ref{prop:c1}, they are equal for any $\bma\in(\CC^*)^{V(\Delta^*)}$, as wanted.

\begin{enumerate}
\item\label{enum:pfth1}
For the first term, we will use \cite{N-S97}. We have denoted there $\dim\gr_p^GH(\fun_{\bma})$ by $\nu_p(\fun_{\bma})$ and, since $\gr_\alpha^VG=0$ for $\alpha\notin\ZZ$, it is also equal to the number denoted there by $\Sigma_{p-1}(\fun_{\bma})$. By the theorem in \cite{N-S97} and Lemma \ref{lem:nondeg}, $\Sigma_{p-1}(\fun_{\bma})$ depends semi-continuously on $\bma$. On the other hand, according to \cite{Kouchnirenko76}, $\dim H(\fun_{\bma})$ is independent of $\bma$ and is computed only in terms of $\Deltas$. Since $\dim H(\fun_{\bma})=\sum_p\Sigma_{p-1}(\fun_{\bma})$, each term in this sum is also constant with respect to $\bma$.
\item\label{enum:pfth2}
We will prove the local constancy of $\dim\gr_{2p}^WH(\fun_{\bma})$ near any $\bma_o\in(\CC^*)^{V(\Delta^*)}$. As noticed in \cite[\S4]{D-S02a}, we can apply the results of Section 2 of \loccit\ to $\fun_{\bma_o}$. We fix a Stein open set $\ccB^o$ adapted to $\fun_{\bma_o}$ as in \cite[\S2a]{D-S02a}, and fix a neighbourhood~$X$ of~$\bma_o$ so that it is also adapted to any $\fun_{\bma}$ for $\bma$ in this neighbourhood. By construction, all the critical points of $\fun_{\bma_o}$ are contained in the interior of $\ccB^o$ if $X$ is chosen small enough, and since $\mu(\fun_{\bma})$ is constant, the same property holds for $\bma\in X$. By using successively Theorem 2.9, Remark 2.11 and Proposition 1.20(1) in \cite{D-S02a}, we deduce that, when $\bma$ varies in $X$, the localized partial Laplace transformed Gauss-Manin systems $G(\fun_{\bma})$ form an $\cO_X[\tau,\tau^{-1}]$-free module with integrable connection and regular singularity along $\tau=0$, which is compatible with base change with respect to $X$. As~a~consequence, the monodromy of each $G(\fun_{\bma})$ around $\tau=0$ is constant, and the assertion follows.\qedhere
\end{enumerate}
\end{proof}

\begin{remarque}[suggested by the referee]
If we relax the condition in Section \ref{sec:toric} that the toric Fano variety $X$ is \emph{nonsingular}, then we have to consider the orbifold Chow ring  of $X$ as in \cite{B-C-S05}, or the Chen-Ruan orbifold cohomology of $X$. For the cohomology of the untwisted sector (\ie the usual cohomology), the Hard Lefschetz theorem is still valid (\cf \cite{Steenbrink77}) and Proposition \ref{prop:c1} still holds, \ie \eqref{eq:conjKKP} holds for $\fun$. Moreover, Part \eqref{enum:pfth2} of the proof of Theorem \ref{th:KKPconj} also extends to this setting. However, the semicontinuity result of \cite{N-S97} used in Part \eqref{enum:pfth1} of the proof is not enough to imply the constancy (with respect to $\bma$) of $\nu_p(\fun_{\bma})$.

On the other hand, one can also consider the various $h^{p,q}_\alpha(\fun)$ for $\alpha\in(0,1)\cap\QQ$ and, correspondingly, the twisted sectors of the orbifold $X$. In such a case, Hard Lefschetz for $\fun$ may already give trouble (\cf \cite{Fernandez06}).
\end{remarque}

\backmatter
\providecommand{\eprint}[1]{\href{http://arxiv.org/abs/#1}{\texttt{arXiv\string:\allowbreak#1}}}\providecommand{\doi}[1]{\href{http://dx.doi.org/#1}{\texttt{doi\string:\allowbreak#1}}}
\providecommand{\bysame}{\leavevmode ---\ }
\providecommand{\og}{``}
\providecommand{\fg}{''}
\providecommand{\smfandname}{\&}
\providecommand{\smfedsname}{eds.}
\providecommand{\smfedname}{ed.}

\end{document}